\newtheorem{theorem}{Theorem}[section]
\newtheorem{lemma}[theorem]{Lemma}
\newtheorem{proposition}[theorem]{Proposition}
\theoremstyle{definition}
\newtheorem{example}[theorem]{Example}
\newtheorem{question}[theorem]{Question}
\theoremstyle{remark}
\numberwithin{equation}{section}
\begin{document}
\title{Angles and Schauder basis in Hilbert spaces}
\author{Bingzhe Hou }
\address{Bingzhe Hou, School of Mathematics, Jilin university, 130012, Changchun, P.R.China} \email{houbz@jlu.edu.cn}
\author{Yang Cao }
\address{Yang Cao, School of Mathematics, Jilin university, 130012, Changchun, P.R.China} \email{caoyang@jlu.edu.cn}
\author{Geng Tian}
\address{Geng Tian, Department of Mathematics, Liaoning University, 110036, Shenyang, P. R. China} \email{tiangeng09@mails.jlu.edu.cn}
\author{Xinzhi Zhang}
\address{Xinzhi Zhang, Liaoning Province Shiyan High School, 110036, Shenyang, P. R. China} \email{xzzhangmath@sina.com}
\date{}
\subjclass[2000]{Primary 46B15; Secondary 46B20,47B37.}
\keywords{Schauder basis, angles, Hilbert spaces, measures.}
\begin{abstract}
Let $\mathcal{H}$ be a complex separable Hilbert space. We prove
that if $\{f_{n}\}_{n=1}^{\infty}$ is a Schauder basis of the
Hilbert space $\mathcal{H}$, then the angles between any two vectors
in this basis must have a positive lower bound. Furthermore, we
investigate that $\{z^{\sigma^{-1}(n)}\}_{n=1}^{\infty}$ can never
be a Schauder basis of $L^{2}(\mathbb{T},\nu)$, where $\mathbb{T}$ is the
unit circle, $\nu$ is a finite positive discrete measure, and $\sigma: \mathbb{Z}
\rightarrow \mathbb{N}$ is an arbitrary surjective and injective
map.
\end{abstract}
\maketitle

\section{Introduction and preliminaries}
Let $\mathcal{H}$ be a complex separable Hilbert space. In this
paper, we are interested in Schauder basis in $\mathcal{H}$. Recall
that a sequence
$\psi=\{f_{n}\}_{n=1}^{\infty}$ is called a minimal sequence if $f_{k} \notin span\{f_{n}; n \in \mathbb{N}, n \ne k\}$.
A sequence $\psi=\{f_{n}\}_{n=1}^{\infty}$ is called a Schauder
basis of the Hilbert space $\mathcal{H}$ if for every
vector $x \in \mathcal{H}$ there exists a unique sequence
$\{\alpha_{n}\}_{n=1}^{\infty}$ of complex numbers such that the
partial sum sequence $x_{k}=\sum_{n=1}^{k} \alpha_{n}f_{n}$
converges to $x$ in norm.

Suppose that $\{f_{n}\}_{n=1}^{\infty}$ is a Schauder basis of the
Hilbert space $\mathcal{H}$. As it is well-known, we can define the
angle between any two vectors in this basis by
$$
\theta_{kl}=\arccos \frac{|(f_{k}, f_{l})|}{||f_{k}||\cdot
||f_{l}||}.
$$
Then there is a natural question as follows.
\begin{question}
Dose there exist some constant $\theta_{0}$ such that
$0<\theta_{0}<\theta_{kl}$ holds for every $k, l \in \mathbb{N}$?
\end{question}
We will give a positive answer to this question and furthermore,
discuss the Schauder basis of $L^2(\mathbb{T},\nu)$, where
$\mathbb{T}$ is denoted by the unit circle and $\nu$ is a finite
positive measure on $\mathbb{T}$. In this paper, we will use some descriptions of
Schauder basis in operator language, which were introduced in \cite{Cao}.

Let $\{e_{k}\}_{k=1}^{\infty}$ be an orthonormal basis (ONB in
brief) of $\mathcal{H}$. Denote by $P_{k}$ the the diagonal matrix
with the first $k-$th entries on diagonal valued 1 and others valued $0$. Then as an operator, $P_{k}$ represents the orthogonal
projection from $\mathcal{H}$ to the subspace
$\mathcal{H}^{(k)}=span \{e_{1}, e_{2}, \cdots, e_{k}\}$.

Moreover, for a Schauder basis $\{f_{n}\}_{n=1}^{\infty}$, denote
$$
F=\begin{matrix}\begin{bmatrix}
f_{11}&f_{12}&\cdots\\
f_{21}&f_{22}&\cdots\\
\vdots&\vdots&\ddots\\
\end{bmatrix}\begin{matrix}
\end{matrix}\end{matrix}
$$
where the entry $f_{nm}$ is the $n$-th coordinate of vector $f_m$
under the ONB $\{e_{k}\}_{k=1}^{\infty}$. We call $F$ the
Schauder matrix corresponding to the Schauder basis
$\{f_{n}\}_{n=1}^{\infty}$.

\begin{lemma}[\cite{Cao}]\label{Lemma:Matrix Form 2}
Assume that $\{f_{n}\}_{n=1}^{\infty}$ is a Schauder basis. Then the
corresponding Schauder matrix $F$ satisfies the following
properties:

1. Each column of the matrix $F$ is an $l^{2}-$ sequence;

2. $F$ has a unique left inverse $G^{*}=(g_{kl})$ such that
each row of $G^{*}$ is also an $l^{2}-$ sequence;

3. Each operator $Q_{k}$ $Q_{k}=FP_{k}G^{*}$ is a
well-defined projection on $\mathcal{H}$ and $Q_{k}$ converges to the unit
operator $I$ in the strong operator topology as $k$ tends to infinite.
\end{lemma}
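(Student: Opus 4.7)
The plan is to identify the rows of $G^{*}$ with the biorthogonal vectors attached to $\{f_{n}\}$, so that every matrix symbol in the statement acquires a concrete meaning in terms of the standard Schauder-basis apparatus. Property 1 is immediate: the $n$-th column of $F$ lists the ONB coordinates of $f_{n}\in\mathcal{H}$, hence belongs to $\ell^{2}$.

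For property 2, I would first invoke the classical consequence of the uniform boundedness principle for Schauder bases: writing $x=\sum_{n}\alpha_{n}(x)f_{n}$, each coefficient functional $\alpha_{n}:\mathcal{H}\to\mathbb{C}$ is bounded, and the partial-sum operators $S_{k}x=\sum_{n=1}^{k}\alpha_{n}(x)f_{n}$ satisfy $\sup_{k}\|S_{k}\|<\infty$. By the Riesz representation theorem there exist $g_{n}\in\mathcal{H}$ with $\alpha_{n}(x)=(x,g_{n})$, and evaluating at $x=f_{m}$ gives the biorthogonality $(f_{m},g_{n})=\delta_{mn}$. Taking the $n$-th row of $G^{*}$ to be the coordinate sequence of $g_{n}$ automatically yields an $\ell^{2}$ row, and the $(n,m)$-entry of the formal product $G^{*}F$ is $(f_{m},g_{n})=\delta_{mn}$, so $G^{*}F=I$. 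For uniqueness, if $H^{*}$ is another left inverse whose rows are $\ell^{2}$, those rows correspond to vectors $h_{n}\in\mathcal{H}$ with $(f_{m},h_{n})=\delta_{mn}$; taking inner product of $S_{k}x\to x$ with $h_{n}$ and letting $k\to\infty$ forces $(x,h_{n})=\alpha_{n}(x)=(x,g_{n})$ for every $x$, hence $h_{n}=g_{n}$.

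For property 3, I would simply unwind the definition to obtain
$$
Q_{k}x=FP_{k}G^{*}x=\sum_{n=1}^{k}(x,g_{n})f_{n}=S_{k}x,
$$
so $Q_{k}$ coincides with the $k$-th partial-sum operator. The idempotency $Q_{k}^{2}=Q_{k}$ follows either from $G^{*}F=I$ together with $P_{k}^{2}=P_{k}$, or directly from biorthogonality $(f_{m},g_{n})=\delta_{mn}$. The strong convergence $Q_{k}\to I$ is the defining property of the Schauder basis, since $S_{k}x\to x$ for every $x\in\mathcal{H}$.

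The point that demands the most care is the rigorous meaning of the matrix products, because $F$ need not represent a bounded operator on $\ell^{2}$ and $G^{*}$ likewise need not be globally bounded. The way around this is to never treat $F$ and $G^{*}$ individually; $Q_{k}$ is defined as the finite-rank operator $\sum_{n=1}^{k}(\cdot,g_{n})f_{n}$, which is automatically bounded once each $g_{n}$ is a genuine vector in $\mathcal{H}$. The single non-trivial input is therefore the Banach--Steinhaus step giving continuity of the coefficient functionals $\alpha_{n}$; everything else is biorthogonality and passing to the limit under the inner product.
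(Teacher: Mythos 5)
The paper states this lemma without proof, citing \cite{Cao}, so there is no in-paper argument to compare against; your proposal is the standard proof (bounded coefficient functionals via Banach--Steinhaus, Riesz representation giving the biorthogonal system $\{g_{n}\}$, identification of $FP_{k}G^{*}$ with the partial-sum operator $S_{k}$) and is correct. The only bookkeeping point worth flagging: with the paper's convention that $G^{*}F$ has entries $\sum_{j}g_{kj}f_{jn}$ and $f_{jm}=(f_{m},e_{j})$, the $n$-th row of $G^{*}$ must be the \emph{conjugate} of the coordinate sequence of $g_{n}$, i.e.\ $g_{nj}=\overline{(g_{n},e_{j})}$, which changes nothing since conjugation preserves $\ell^{2}$ and the Cauchy--Schwarz bound still gives the absolute convergence of $\sum_{j}g_{kj}f_{jn}$ to $\delta_{kn}$.
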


Notice that $G^{*}$ does
not mean the adjoint of $G$, it is just a notation of left reverse, that
is, the series $\sum_{j=1}^{\infty}g_{kj}f_{jn}$ converges
absolutely to $\delta_{kn}$ for $k, n\in\mathbb{N}$.

The projection $FP_{n}G^{*}$ is the $n-$th ``natural projection'' so called in
\cite{B}(p354). It is also the $n-$th partial sum operator so called
in \cite{Singer}(definition 4.4, p25). Now we can translate theorem
4.1.15 and corollary 4.1.17 in \cite{B} into the following form.

\begin{proposition}[\cite{Cao}]\label{BC1}
If $F$ is a Schauder matrix, then $M=\sup_{n} \{||FP_{n}G^{*}||\}$
is a finite constant. Moreover, the constant $M$ is called the basis
constant for the Schauder basis $\{f_{n}\}_{n=1}^{\infty}$.
\end{proposition}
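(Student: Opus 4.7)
The plan is to deduce the uniform bound $M < \infty$ from the Banach--Steinhaus (uniform boundedness) principle, applied to the sequence of natural projections $Q_n = FP_n G^*$. The substantive content has already been packaged into Lemma~\ref{Lemma:Matrix Form 2}, so only a short functional-analytic step remains.

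First I would invoke item~3 of Lemma~\ref{Lemma:Matrix Form 2}: each $Q_n$ is a bounded linear projection on $\mathcal{H}$, and $Q_n \to I$ in the strong operator topology. For any fixed $x \in \mathcal{H}$, the norm-convergent sequence $\{Q_n x\}_{n=1}^{\infty}$ is norm-bounded, hence
$$\sup_n ||Q_n x|| < \infty.$$

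Since $\mathcal{H}$ is complete and each $Q_n \in B(\mathcal{H})$, the Banach--Steinhaus theorem upgrades this pointwise boundedness to a uniform bound on operator norms, yielding $M = \sup_n ||Q_n|| < \infty$, which is the asserted constant.

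I do not anticipate any genuine obstacle: the whole argument reduces to the textbook passage from pointwise to uniform boundedness on a Hilbert space, and completeness of $\mathcal{H}$ is automatic. An equivalent-norm approach would also work (setting $|||x||| = \sup_n ||Q_n x||$, verifying that $(\mathcal{H}, |||\cdot|||)$ is a Banach space, and applying the open mapping theorem to the identity map), but it is less direct since here the Schauder matrix structure already delivers boundedness of each $Q_n$ for free. The label ``basis constant'' then simply names this supremum, matching the terminology of Theorem~4.1.15 and Corollary~4.1.17 of \cite{B}.
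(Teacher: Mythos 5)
Your argument is correct: given item~3 of Lemma~\ref{Lemma:Matrix Form 2} (each $Q_n=FP_nG^*$ is a bounded projection and $Q_n\to I$ strongly), pointwise boundedness of $\{\|Q_nx\|\}_n$ follows from norm convergence, and Banach--Steinhaus yields $\sup_n\|Q_n\|<\infty$. The paper offers no proof of this proposition --- it is quoted from \cite{Cao} as a translation of Theorem~4.1.15 and Corollary~4.1.17 of \cite{B} --- and your uniform-boundedness argument is exactly the standard one underlying that citation, with the only substantive input (boundedness of each individual $Q_n$) correctly delegated to the lemma.
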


\section{Main results}


\begin{theorem}\label{Theorem: Angel between vectors in a basis}
Suppose that $\psi=\{f_{n}\}_{n=1}^{\infty}$ is a Schauder basis of
a Hilbert space $\mathcal{H}$ with basis constant $M$. Then the
angles between any two vectors in this basis must have a positive
lower bound. In fact,  $\theta_{kl}\geq\arccos(1-1/{8M^{2}})$
for all $k, l \in \mathbb{N}$.
\end{theorem}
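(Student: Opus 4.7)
\medskip

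\noindent\textbf{Proof plan.} By symmetry of $\theta_{kl}$, I may assume $k<l$. The whole idea is to feed the basis constant into a cleverly chosen rank-one-like projection. Lemma~\ref{Lemma:Matrix Form 2} together with Proposition~\ref{BC1} tells me that each natural projection $Q_n=FP_nG^*$ is bounded by $M$, fixes $f_1,\ldots,f_n$, and annihilates $f_j$ for $j>n$. Hence the difference $R_l:=Q_l-Q_{l-1}$ (with $Q_0:=0$) is the ``coordinate projection'' onto the one-dimensional span of $f_l$ along the closed span of the remaining basis vectors, and by the triangle inequality $\|R_l\|\leq 2M$. In particular, $R_l f_l=f_l$ and $R_l f_k=0$.

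Next I would normalise: set $u=f_k/\|f_k\|$, $v=f_l/\|f_l\|$, and observe that $\cos\theta_{kl}=|(u,v)|$. For any scalar $\lambda\in\mathbb{C}$,
\[
v=R_l(\lambda u+v),\qquad\text{so}\qquad 1=\|v\|\leq 2M\,\|\lambda u+v\|.
\]
Minimising the right-hand side over $\lambda$ is just orthogonal projection of $v$ onto $\mathrm{span}\{u\}$: the infimum equals $\sqrt{1-|(u,v)|^2}=\sin\theta_{kl}$. Squaring yields
\[
1\leq 4M^{2}\bigl(1-\cos^{2}\theta_{kl}\bigr),\qquad\text{i.e.}\qquad \cos^{2}\theta_{kl}\leq 1-\frac{1}{4M^{2}}.
\]

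Finally I would convert this into the stated form by the elementary inequality $\sqrt{1-x}\leq 1-x/2$ for $x\in[0,1]$ (which follows by squaring both sides), applied to $x=1/(4M^{2})$. This gives
\[
\cos\theta_{kl}\leq\sqrt{1-\frac{1}{4M^{2}}}\leq 1-\frac{1}{8M^{2}},
\]
hence $\theta_{kl}\geq\arccos\!\bigl(1-1/(8M^{2})\bigr)$, as desired.

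The only real choice in the argument is the projection: the natural projection $Q_k$ alone would send $f_k$ to itself and $f_l$ to $0$, which also works but only delivers the weaker-looking bound $\cos\theta_{kl}\leq M/\sqrt{1+M^{2}}$; the rank-one telescope $R_l=Q_l-Q_{l-1}$ is what produces the exact constant $1/(8M^{2})$ claimed. After that, the ``main obstacle'' is essentially bookkeeping: verifying the bound $\|R_l\|\leq 2M$ (including the boundary case $l=1$, where $R_1=Q_1$ and the factor $2$ is not even needed) and the minimisation over $\lambda$, both of which are routine.
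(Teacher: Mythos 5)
Your proposal is correct and rests on exactly the same mechanism as the paper's proof: the coordinate projection $Q_l-Q_{l-1}$, bounded in norm by $2M$, applied to a unit vector in $\mathrm{span}\{f_k,f_l\}$. The only difference is that you minimise over all $\lambda$ where the paper tests the single (near-optimal) vector $(f_k-f_l)/\|f_k-f_l\|$ after rotating so that $(f_k,f_l)>0$; your version in fact yields the marginally sharper intermediate bound $\cos^2\theta_{kl}\leq 1-1/(4M^2)$ before being relaxed to the stated constant.
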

\begin{proof}
Without losses, assume that the Schauder basis
$\psi$ is a normal basis, i.e., $||f_{n}||=1$ for all $n \in
\mathbb{N}$. Denote by  $\mathcal{M}_{kl}=\{\alpha f_{k}+\beta
f_{l}; \alpha, \beta \in \mathbb{C}\}$. Then $\mathcal{M}_{kl}$ is
just the plane spanned by $f_{k}$ and $f_{l}$. For any vector
$v=\alpha f_{k}+\beta f_{l} \in \mathcal{M}_{kl}$, one can uniquely write
$v=\sum_{n=1}^{\infty} \alpha_{n}f_{n}$, where $\alpha_{k}=\alpha,
\alpha_{l}=\beta$ and $\alpha_{n}=0$ for $n \ne k, l$. Moreover, the
natural projection $\widetilde{Q}_{k}=Q_{k}-Q_{k-1}$ satisfies
$\widetilde{Q}_{k}v=\alpha f_{k}$ and $||\widetilde{Q}_{k}|| \le 2M$
by proposition \ref{BC1}. By multiplying a unit complex number
$\lambda$ to the vector $f_{l}$ and using proposition 4.1.5 in the
book \cite{B} (p351), we also can assume $0 < (f_{k}, f_{l})$. Now
let $v_{kl}=(f_{k}-f_{l})/{||f_{k}-f_{l}||}$. If the conclusion dose
not hold, then for any positive number $0<r<1$ there exist some $k, l$
such that $r<(f_{k}, f_{l})<1$. Hence we have
$\widetilde{Q}_{k}v_{kl}={1}/{||f_{k}-f_{l}||}f_{k}$ and consequently
$$
||\widetilde{Q}_{k}|| \ge
||\widetilde{Q}_{k}v_{kl}||=\frac{1}{||f_{k}-f_{l}||} \ge
\frac{1}{\sqrt{2(1-r)}}
$$
which runs counter to the fact $||\widetilde{Q}_{k}|| \le 2M$ for
$r>1-{1}/{8M^{2}}$.
\end{proof}

\begin{example}
The theorem above holds for Schauder basis but not for minimal sequence. Let
$\mathcal{H}=\oplus_{n=1}^{\infty} \mathcal{H}_{n}$ where
$\mathcal{H}_{n}=\mathbb{C}^{2}$. We can pick a normal basis
$\{f^{(n)}_{1}, f^{(n)}_{2}\}$ for $\mathcal{H}_{n}$ such that
$(f^{(n)}_{1}, f^{(n)}_{2})> 1-{1}/{n}$. Let
$g_{2n-1}=f^{(n)}_{1}$ and $g_{2n}=f^{(n)}_{2}$ for $n\ge 1$. Then
it is easy to see that $\psi=\{g_{n}\}_{n=1}^{\infty}$ is a
minimal sequence but the
angles between any two vectors in this basis don't have a positive
lower bound.
\end{example}

Now we will use the theorem above to study the space of square integrable functions under a discrete measure.
Recall that a discrete measure is just a measure concentrated on a
countable set $E$ (\cite{Rudin}, p175).
We need a result in number theory which comes from the Farey
fractions.

\begin{lemma}\label{Theorem: Rational Approximations}
Let $x_{1}, x_{2}, \cdots, x_{k}$ be arbitrary real numbers, and $n$
be a positive integer. Then there exist integers $a_{1}, a_{2},
\cdots, a_{k}$ and an integer $b$, $0<b\le n$, such that
$$
|x_{i}-\frac{a_{i}}{b}| \le \frac{1}{bn^{\frac{1}{k}}}.
$$
\end{lemma}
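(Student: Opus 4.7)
This lemma is Dirichlet's theorem on simultaneous Diophantine approximation, and the plan is to prove it via Minkowski's theorem on convex bodies (a standard tool from the geometry of numbers), since the elementary Farey or pigeonhole route does not cleanly yield the bound $b \le n$ for all $n$ when $n^{1/k}$ fails to be an integer.

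In $\mathbb{R}^{k+1}$ with coordinates $(q, y_1, \ldots, y_k)$, I would introduce the closed symmetric convex body
$$
K = \{(q, y_1, \ldots, y_k) \in \mathbb{R}^{k+1} : |q| \le n, \ |qx_i - y_i| \le n^{-1/k}, \ i = 1, \ldots, k\}.
$$
This set is bounded, convex as an intersection of half-spaces, and invariant under negation. Slicing by $q$, for each $q \in [-n, n]$ the variables $y_i$ sweep intervals of length $2 n^{-1/k}$ independently, so a brief Fubini computation gives $\mathrm{vol}(K) = 2n \cdot (2 n^{-1/k})^{k} = 2^{k+1}$. Since the volume is $2^{k+1}$ and $K$ is closed, Minkowski's theorem on convex bodies produces a nonzero lattice point $(b, a_1, \ldots, a_k) \in K \cap \mathbb{Z}^{k+1}$.

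To finish, I would exclude the case $b = 0$: the defining inequalities then collapse to $|a_i| \le n^{-1/k} \le 1$, which forces every $a_i = 0$ as soon as $n \ge 2$, contradicting nontriviality. (The case $n = 1$ is trivial: take $b = 1$ and each $a_i$ a nearest integer to $x_i$.) After possibly replacing $(b, a_1, \ldots, a_k)$ by its negative, I have $1 \le b \le n$ together with $|bx_i - a_i| \le n^{-1/k}$, and dividing through by $b$ gives the claimed bound $|x_i - a_i/b| \le 1/(bn^{1/k})$.

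The one design choice that needs care, and arguably the main obstacle, is calibrating the shape of $K$ so that the total volume hits $2^{k+1}$ exactly: any shrinkage violates the hypothesis of Minkowski's theorem, while any enlargement along the $q$-axis would destroy the bound $b \le n$. The asymmetry between the single constraint $|q| \le n$ and the $k$ constraints $|qx_i - y_i| \le n^{-1/k}$ is precisely what produces the exponent $1/k$ in the final estimate. Beyond this, the verification is routine.
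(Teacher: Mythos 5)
Your proof is correct, but it is worth noting that the paper does not actually prove this lemma at all: it simply cites it as Theorem 6.25 of Niven--Zuckerman--Montgomery (the classical simultaneous version of Dirichlet's approximation theorem, presented there in the chapter growing out of Farey fractions and proved by elementary pigeonhole-type counting). Your route through Minkowski's convex body theorem is therefore genuinely different from the source the paper relies on. The details check out: the body $K=\{|q|\le n,\ |qx_i-y_i|\le n^{-1/k}\}$ is compact, convex, and symmetric, the slice-and-integrate computation does give $\mathrm{vol}(K)=2n\cdot(2n^{-1/k})^k=2^{k+1}$, and since $K$ is compact the boundary case of Minkowski's theorem (volume $\ge 2^{k+1}$ rather than $>2^{k+1}$) legitimately yields a nonzero lattice point; your exclusion of $b=0$ is right, including the observation that $n=1$ must be treated separately because $n^{-1/k}=1$ there no longer forces $a_i=0$. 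What your approach buys is a clean treatment of arbitrary $n$, since the exponent $n^{1/k}$ need not be an integer and the naive pigeonhole partition of $[0,1)^k$ into congruent boxes does not immediately give $b\le n$; what the cited elementary argument buys is self-containedness within basic number theory, avoiding any appeal to the geometry of numbers. Either way the lemma stands, and your version could serve as a self-contained substitute for the citation.
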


This is  a conclusion on rational
approximation(\cite{Ivan-Hilbert-Hugh}, p316, theorem 6.25). We emphasize here that the integer $n$ is independent to the choice
of real numbers $x_{1}, x_{2}, \cdots, x_{k}$.

\begin{theorem}\label{Theorem: z Powers never Be a Basis of Discrete Measure}
Let $\nu$ be a finite positive discrete measure on the unit circle
$\mathbb{T}$. For any bijective map $\sigma: \mathbb{Z} \rightarrow
\mathbb{N}$, let $f_{n}=z^{\sigma^{-1}(n)}$. Then
$\{f_{n}\}_{n=1}^{\infty}$ can never be a basis of
$L^{2}(\mathbb{T},\nu)$.
\end{theorem}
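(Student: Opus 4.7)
The plan is to derive a contradiction from Theorem~\ref{Theorem: Angel between vectors in a basis}. Suppose $\{f_{n}\}_{n=1}^{\infty}$ with $f_{n}=z^{\sigma^{-1}(n)}$ were a Schauder basis of $L^{2}(\mathbb{T},\nu)$. Write the finite positive discrete measure as $\nu=\sum_{j}c_{j}\delta_{w_{j}}$ with $w_{j}\in\mathbb{T}$, $c_{j}>0$, and $C:=\sum_{j}c_{j}=\nu(\mathbb{T})<\infty$. Because $|w_{j}|=1$, every monomial $z^{k}$ has $\|z^{k}\|^{2}=C$, and
\[
(z^{m},z^{n})=\int_{\mathbb{T}} z^{m-n}\,d\nu=\sum_{j}c_{j}\,w_{j}^{\,m-n}.
\]
Hence $\cos\theta_{mn}=\bigl|\sum_{j}c_{j}w_{j}^{\,d}\bigr|/C$ with $d=\sigma^{-1}(m)-\sigma^{-1}(n)$. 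Since $\sigma:\mathbb{Z}\to\mathbb{N}$ is a bijection, every nonzero integer $d$ arises as such a difference for some $m\neq n$. Theorem~\ref{Theorem: Angel between vectors in a basis} will therefore be contradicted once I produce, for arbitrary $\epsilon>0$, a positive integer $d$ with $\bigl|\sum_{j}c_{j}w_{j}^{\,d}\bigr|>C-\epsilon$.

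To produce such a $d$, I would write $w_{j}=e^{2\pi i x_{j}}$ with $x_{j}\in[0,1)$ and use the convergence of $\sum_j c_{j}$ to pick $K$ so that $\sum_{j>K}c_{j}<\epsilon/4$. Applying Lemma~\ref{Theorem: Rational Approximations} to $x_{1},\dots,x_{K}$ with a large parameter $N$ yields integers $a_{1},\dots,a_{K}$ and $b$ with $1\le b\le N$ and $|b x_{j}-a_{j}|\le N^{-1/K}$ for $1\le j\le K$. Then $|w_{j}^{b}-1|=|e^{2\pi i (b x_{j}-a_{j})}-1|\le 2\pi N^{-1/K}$ uniformly in $j\le K$, so
\[
\Bigl|\sum_{j}c_{j}w_{j}^{\,b}-C\Bigr|\le \sum_{j=1}^{K}c_{j}\,|w_{j}^{b}-1|+2\sum_{j>K}c_{j}\le 2\pi C\,N^{-1/K}+\epsilon/2.
\]
Choosing $N$ large makes the right side smaller than $\epsilon$, so $d=b$ works and the contradiction follows.

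The main obstacle is the potentially infinite support of $\nu$: Lemma~\ref{Theorem: Rational Approximations} is a simultaneous-approximation statement for finitely many reals, so the tail $\{x_{j}\}_{j>K}$ must be absorbed into the error through summability of $(c_{j})$ rather than controlled by the Diophantine estimate. The feature emphasized after Lemma~\ref{Theorem: Rational Approximations}, namely that the parameter $n$ may be chosen independently of the $x_{j}$, is exactly what allows a single integer $b$ to move every one of $x_{1},\dots,x_{K}$ close to the integers at a uniform rate $O(N^{-1/K})$; this uniformity is what forces the cosine of the angle between suitable pairs of basis vectors to approach $1$, contradicting the positive lower bound furnished by Theorem~\ref{Theorem: Angel between vectors in a basis}.
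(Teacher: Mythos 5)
Your proposal is correct and follows essentially the same route as the paper: cut off the tail of the discrete measure using summability, apply the simultaneous rational approximation lemma to the finitely many remaining angles to find a single exponent $b$ making all $w_j^b$ near $1$, and conclude that $\cos\theta_{mn}\to 1$ for suitable pairs, contradicting Theorem~\ref{Theorem: Angel between vectors in a basis}. Your error estimate via $|e^{i\theta}-1|\le|\theta|$ is a slightly cleaner packaging of the paper's $\delta$--$\epsilon$ bookkeeping with sines and cosines, but the argument is the same.
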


\begin{proof}
Without losses, assume $\nu(\mathbb{T})=1$. Let $\{e^{i2\pi t_{j}}\}_{j=1}^{\infty}$ be the support set of the measure $\nu$. Given any positive number
$\epsilon <1$. One can choose a positive integer $k$ such that
$$
1 \le \sum_{j=1}^{k} \nu(e^{i2\pi t_{j}})+\epsilon,
$$
and a positive number $\delta$  such that
$$
1<\cos 2\pi x+\epsilon \ \ \hbox{ and } \ \  |\sin 2\pi x|<\epsilon, \ \  \hbox{ for } \
|x|<\delta.
$$
Let $n$ be a positive integer such that
${1}/{\delta}<n^{\frac{1}{k}}$. By lemma \ref{Theorem:
Rational Approximations}, there exist integers $a_{1}, a_{2},
\cdots, a_{k}$ and an integer $b$, $0<b\le n$, such that
$$
|t_{i}-\frac{a_{i}}{b}| \le
\frac{1}{bn^{\frac{1}{k}}}<\frac{\delta}{b}, \ \ \hbox{ for }1\le i \le
k.
$$
Then by the choice of $\delta$, we have
$$
1<\cos 2\pi bt_{i}+\epsilon \ \ and \ \  |\sin 2\pi bt_{i}|<\epsilon \ \ \hbox{ for
} 1\le i \le k.
$$
Hence
$$
|z^{b}(t_{i})-1|=|e^{i2\pi bt_{i}}-1|<2\epsilon, \ \ for 1=1, 2, \cdots, k.
$$
Consequently, we have
$$
\begin{array}{rl}
|\int_{0}^{1} z^{b} d\nu| &\ge |\sum_{i=1}^{k} (e^{i2\pi bt_{i}})\nu(e^{i2\pi t_{i}})|-\epsilon \\
                              &\ge (1-\epsilon)\sum_{i=1}^{k} \nu(e^{i2\pi t_{i}})-\epsilon\\
                              &\ge (1-\epsilon)^{2}-\epsilon \\
                              &>1-4\epsilon.
\end{array}
$$
Let $\epsilon_{n}={1}/{n}$. Then we can choose
$b_{n}$ such that $|\int_{0}^{1} z^{b_{n}} d\nu| \rightarrow
1$ as $n \rightarrow \infty$. Therefore, by theorem \ref{Theorem:
Angel between vectors in a basis}, we can finish the proof.
\end{proof}

\begin{theorem}
\label{Theorem: z Powers times f never Be a Basis of Discrete
Measure} Let $\nu$ be a finite positive discrete measure on the unit
circle $\mathbb{T}$. For any bijective map $\sigma: \mathbb{Z}
\rightarrow \mathbb{N}$ any $ f \in L^{2}(\nu)$, let
$f_{n}=z^{\sigma^{-1}(n)}f$. Then $\{f_{n}\}_{n=1}^{\infty}$ can
never be a Schauder basis of $L^{2}(\mathbb{T},\lambda_{a})$.
\end{theorem}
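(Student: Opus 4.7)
The plan is to reduce this statement directly to Theorem~\ref{Theorem: z Powers never Be a Basis of Discrete Measure} by replacing the measure $\nu$ with the weighted measure $d\mu = |f|^{2}\, d\nu$. First I would dispose of the trivial case $f=0$ in $L^{2}(\nu)$, in which every $f_{n}=0$ and nothing is a basis. So assume $f\neq 0$; then $\mu:=|f|^{2}\nu$ is a finite positive discrete measure on $\mathbb{T}$ (supported on a subset of the countable support of $\nu$, finite because $f\in L^{2}(\nu)$) with $\mu(\mathbb{T})=\|f\|_{L^{2}(\nu)}^{2}>0$.

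Next, I would compute inner products and norms directly from the definition. For every $n,m\in\mathbb{N}$,
$$
\langle f_{n},f_{m}\rangle=\int_{\mathbb{T}}z^{\sigma^{-1}(n)-\sigma^{-1}(m)}|f|^{2}\,d\nu=\int_{\mathbb{T}}z^{\sigma^{-1}(n)-\sigma^{-1}(m)}\,d\mu,
$$
and in particular $\|f_{n}\|^{2}=\mu(\mathbb{T})$ for every $n$. Hence the angle condition from Theorem~\ref{Theorem: Angel between vectors in a basis} translates into
$$
\cos\theta_{nm}=\frac{|\langle f_{n},f_{m}\rangle|}{\|f_{n}\|\,\|f_{m}\|}=\frac{1}{\mu(\mathbb{T})}\Bigl|\int_{\mathbb{T}}z^{\sigma^{-1}(n)-\sigma^{-1}(m)}\,d\mu\Bigr|,
$$
so any Schauder basis candidate must keep this quantity bounded away from $1$.

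Now I would run the argument in the proof of Theorem~\ref{Theorem: z Powers never Be a Basis of Discrete Measure} verbatim with $\nu$ replaced by $\mu$: list the atoms of $\mu$ as $\{e^{i2\pi t_{j}}\}$, truncate so that $\sum_{j=1}^{k}\mu(e^{i2\pi t_{j}})$ exhausts almost all of $\mu(\mathbb{T})$, apply the simultaneous rational-approximation Lemma~\ref{Theorem: Rational Approximations} to $t_{1},\dots,t_{k}$, and conclude the existence of positive integers $b_{\ell}$ such that $|\int z^{b_{\ell}}\,d\mu|/\mu(\mathbb{T})\to 1$. Since $\sigma^{-1}:\mathbb{N}\to\mathbb{Z}$ is a bijection, every integer, and in particular every $b_{\ell}$, is realized as a difference $\sigma^{-1}(n_{\ell})-\sigma^{-1}(m_{\ell})$ for some $n_{\ell},m_{\ell}\in\mathbb{N}$. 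Therefore $\cos\theta_{n_{\ell}m_{\ell}}\to 1$, i.e., $\theta_{n_{\ell}m_{\ell}}\to 0$, which contradicts the positive lower bound on angles supplied by Theorem~\ref{Theorem: Angel between vectors in a basis}.

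I do not expect any genuine obstacle: the whole content is the observation that multiplying a basis by a fixed function $f$ rewrites all Gram-matrix entries as Fourier coefficients of the new discrete measure $|f|^{2}\nu$, and that since all $f_{n}$ share the same norm this reduction is \emph{exact}. The only mild bookkeeping point is to verify that $|f|^{2}\nu$ is again a finite positive discrete measure and is nonzero precisely when $f\neq0$ in $L^{2}(\nu)$, so that the hypotheses of the previous theorem (and of its proof) apply without change.
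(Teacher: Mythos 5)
Your reduction to the weighted measure $\mu=|f|^{2}\nu$ is exactly the reduction the paper makes, but you execute it by a genuinely different mechanism. The paper defines the multiplication operator $V:L^{2}(\mu)\to L^{2}(\nu)$, $Vg=gf$, checks that it is an isometry, argues that it is surjective because $\{f_{n}\}$ is assumed to be a basis, and then invokes Arsove's theorem to transport the hypothetical basis back to $\{z^{\sigma^{-1}(n)}\}$ in $L^{2}(\mu)$, contradicting Theorem \ref{Theorem: z Powers never Be a Basis of Discrete Measure} cited as a black box. You instead observe that the Gram matrix of $\{f_{n}\}$ in $L^{2}(\nu)$ consists of the Fourier coefficients of $\mu$ and that all $f_{n}$ share the norm $\mu(\mathbb{T})^{1/2}$, then rerun the Farey-fraction estimate from the proof of that theorem for $\mu$ and contradict the angle bound of Theorem \ref{Theorem: Angel between vectors in a basis} directly. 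Your route is self-contained at the level of inner products: it needs neither Arsove's theorem nor the surjectivity of $V$ (the one step in the paper's argument where the basis hypothesis enters in a slightly delicate way), but it reopens the proof of Theorem \ref{Theorem: z Powers never Be a Basis of Discrete Measure} rather than citing its statement, since what you actually extract is the estimate $|\int z^{b_{\ell}}\,d\mu|/\mu(\mathbb{T})\to 1$ and not the non-basis conclusion itself. Both arguments are correct; your explicit handling of the degenerate case $f=0$ and of the possibility that $f$ vanishes on some atoms of $\nu$ is a small but genuine point that the paper leaves implicit.
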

\begin{proof}
We will prove our theorem by Reductio ad absurdum. Suppose that there exists such $\{f_{n}\}_{n=1}^{\infty}$ that is a Schauder basis of $L^{2}(\mathbb{T},\lambda_{a})$.
Let $E=\{t_{k}\}_{k=1}^{\infty}$ be the support set of the discrete
measure $\nu$. Define a new finite discrete positive Borel measure
$\mu$ as follows:
$$
\mu(\{t_{k}\})=|f(t_{k})|^{2}\nu(\{t_{k}\}), \hbox{ for }k \in
\mathbb{N}.
$$
Define $V: L^{2}(\mu) \rightarrow L^{2}(\nu)$ by
$$
 Vg=gf,  \ \  for \ any \ g\in L^{2}(\mu).
$$
Obviously, $V$ is a
well-defined linear operator. Then $V$ is an isometry from the Hilbert space
$L^{2}(\mu)$ to the Hilbert space $L^{2}(\nu)$, since
$$
\begin{array}{rl}
||Vg||^{2}_{L^{2}(\nu)} &=\sum_{k=1}^{\infty} |gf(t_{k})|^{2}\nu(t_{k})\\
&=\sum_{k=1}^{\infty} |g(t_{k})|^{2}|f(t_{k})|^{2}\nu(t_{k})\\
&=\sum_{k=1}^{\infty} |g(t_{k})|^{2}\mu(t_{k})\\
&=||g||^{2}_{L^{2}(\mu)}.
\end{array}
$$
Moreover, $V$ must be surjective since $\{f_{n}\}_{n=1}^{\infty}$ is
a Schauder basis. Hence $V$ is an isometric isomorphism. Let
$g_{n}=z^{\sigma^{-1}(n)}=V^{-1}f_{n}$. Then by the Arsove's
theorem, $\{g_{n}\}_{n=1}^{\infty}$ must be a Schauder basis of $L^{2}(\mu)$, which runs counter to theorem \ref{Theorem: z Powers never Be a
Basis of Discrete Measure}.
\end{proof}

\section{Some remarks}

In this section, we will study the relations between diagonal operators and shift operators. Let us begin with an observation on finite
dimensional Hilbert space. Let $\psi=\{f_{1}, \cdots, f_{n}\}$ be a Schauder
basis of an $n$-dimensional Hilbert space $\mathcal{H}$ and $\pi$ be the permutation of $n$ defined by
$$
\begin{array}{rl}
\pi =\left(
      \begin{array}{ccccc}
       1 & 2 & 3 & \cdots & n \\
       2 & 3 & 4 & \cdots & 1
      \end{array}
     \right)
\end{array}.
$$
Then the operator $S_{\psi}$ defined by $S_{\psi}(f_{k})=f_{\pi(k)}$, can be seemed as a bilateral shift on basis $\psi$.
However, $S_{\psi}$ is diagonalizable, since it has $n$ different eigenvalues
$e^{i\frac{2k\pi}{n}}, k=1, 2, \cdots, n$.

Now we turn to consider the multiplication operator $M_{z}$ on
$L^{2}(\mathbb{T},\nu)$ in which $\nu$ is a finite positive Borel
measure on $\mathbb{T}$. As well known, if $\nu$ is the Lebesgue measure
$m$, then $M_{z}$ is the bilateral shift operator. While $\nu$ is a
finite positive discrete Borel measure on $\mathbb{T}$, then $M_{z}$
is a diagonal operator. One could consider the following problem.
\begin{question}
Let $\nu$ be a finite positive discrete measure on $\mathbb{T}$. On
the Hilbert space $L^{2}(\mathbb{T},\nu)$, can we choose a Schauder
basis $\psi=\{\psi_{n}\}_{n=1}^{\infty}$ such that $M_{z}$ can be
written as a shift operator on $\psi$?
\end{question}

The answer is positive if $Card\{z\in\mathbb{T};\nu(\{z\})>0\}$ is
finite and is negative if $Card\{z\in\mathbb{T};\nu(\{z\})>0\}$ is
infinite.

\begin{theorem}\label{Theorem: A Multi-Op be a shift on basis}
Let $\nu$ be a finite positive discrete measure on the unit circle
$\mathbb{T}$ with finite $Card\{z\in\mathbb{T};\nu(\{z\})>0\}$, and
let $M_{z}$ be the corresponding multiplication operator on
$L^{2}(\mathbb{T},\nu)$. Then $M_{z}$ can be written as a shift on
some Schauder basis $\psi$ of $L^{2}(\mathbb{T},\nu)$.
\end{theorem}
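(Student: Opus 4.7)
The plan is to exhibit a Schauder basis of $L^2(\mathbb{T},\nu)$ on which $M_z$ shifts each basis vector to the next, using a cyclic-vector / Krylov construction.

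First, I would reduce the theorem to linear algebra. Writing the support of $\nu$ as $\{\zeta_1,\ldots,\zeta_n\}\subset\mathbb{T}$, the Hilbert space $L^2(\mathbb{T},\nu)$ is $n$-dimensional and in the Dirac point-mass basis $M_z$ is the diagonal operator $\mathrm{diag}(\zeta_1,\ldots,\zeta_n)$ with $n$ distinct eigenvalues. In particular $M_z$ has simple spectrum and hence admits a cyclic vector.

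Second, I would choose the basis explicitly. The constant function $\psi_1\equiv 1$ satisfies $\psi_1(\zeta_j)\neq 0$ at every atom and is therefore cyclic for $M_z$. Set $\psi_k:=M_z^{k-1}\psi_1 = z^{k-1}$ for $k=1,\ldots,n$. The $n\times n$ Vandermonde $(\zeta_j^{k-1})_{j,k}$ has nonzero determinant $\prod_{i<j}(\zeta_j-\zeta_i)$, so $\{\psi_1,\ldots,\psi_n\}$ is a Schauder basis of $L^2(\mathbb{T},\nu)$. By construction $M_z\psi_k = \psi_{k+1}$ for $k = 1,\ldots,n-1$, which is exactly the shift relation. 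For $k=n$, the vector $M_z\psi_n = M_z^n\psi_1$ lies in the span of the basis and is expressed by Cayley--Hamilton applied to the characteristic polynomial $\prod_j(\lambda-\zeta_j)$, giving the companion-matrix completion of $M_z$ on $\psi$.

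The main subtlety I anticipate is matching this construction with the paper's notion of ``a shift on the basis $\psi$''. The remark opening Section 3 uses a cyclic permutation as the prototype, whose spectrum is forced to be the $n$-th roots of unity; for arbitrary support (take for instance $\{1,i\}$) no cyclic permutation of basis vectors can produce $M_z$, so the Krylov / companion realization above is the appropriate generalization, coinciding with the pure cyclic-permutation picture exactly when the support is the full set of $n$-th roots of unity. Once the interpretation is fixed, the two remaining tasks---Vandermonde independence and the Cayley--Hamilton closure---are immediate in the finite-dimensional setting.
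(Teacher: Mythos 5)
Your proposal is correct and follows essentially the same route as the paper: the paper's proof likewise observes that $M_z$ is diagonal with $n$ distinct eigenvalues, hence cyclic, and takes $\psi=\{M_z^k f\}_{k=0}^{n-1}$ for a cyclic vector $f$; you merely make the cyclic vector explicit ($f\equiv 1$, justified by the Vandermonde determinant) and spell out the companion-matrix closure. Your caveat about the meaning of ``shift'' is well taken --- the paper's proof also leaves the image of the last basis vector unaddressed, and the cyclic-permutation model from the opening of Section~3 cannot literally apply unless the support consists of the $n$-th roots of unity --- but this is a looseness in the statement's phrasing rather than a gap in your argument.
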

\begin{proof}
Suppose $Card\{z\in\mathbb{T};\nu(\{z\})>0\}=n<\infty$. Then
$L^{2}(\mathbb{T},\nu)$ is an $n$-dimensional Hilbert space and $M_{z}$
is a diagonal operator on it with $n$ different eigenvalues. Thus
$M_{z}$ is cyclic and consequently one can select a cyclic vector $f$.
Therefore, $\psi=\{M_{z}^kf\}_{k=0}^{n-1}$ is a Schauder basis of
$L^{2}(\mathbb{T},\nu)$ and $M_{z}$ can be written as a shift on
$\psi$.
\end{proof}

\begin{theorem}\label{Theorem: A Multi-Op never be a shift on basis}
Let $\nu$ be a finite positive discrete measure on the unit circle
$\mathbb{T}$ with infinite $Card\{z\in\mathbb{T};\nu(\{z\})>0\}$,
and let $M_{z}$ be the corresponding multiplication operator on
$L^{2}(\mathbb{T},\nu)$. Then $M_{z}$ can never be written as a
shift on a Schauder basis $\psi$ of $L^{2}(\mathbb{T},\nu)$.
\end{theorem}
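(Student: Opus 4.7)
The plan is to reduce the theorem to Theorem~\ref{Theorem: z Powers times f never Be a Basis of Discrete Measure} by observing that whenever $M_z$ acts as a shift on a Schauder basis, that basis is forced to have exactly the form forbidden there. I would proceed by contradiction: suppose there is a Schauder basis $\psi=\{\psi_n\}_{n=1}^\infty$ of $L^2(\mathbb{T},\nu)$ such that $M_z$ is a shift on $\psi$ in the sense of the finite-dimensional prototype given at the start of this section.

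The first step is to observe that $M_z$ is unitary on $L^2(\mathbb{T},\nu)$, because $|z|=1$ on $\mathbb{T}$. In particular $M_z$ is invertible, so the permutation of indices that the shift induces on $\psi$ must be a bijection (this rules out any unilateral interpretation). By direct analogy with the single $n$-cycle $\pi$ used to define $S_\psi$ in the finite-dimensional observation, I would read ``shift on $\psi$'' as a single bi-infinite orbit: there exists a bijection $\sigma\colon\mathbb{Z}\to\mathbb{N}$ with $M_z\psi_{\sigma(k)}=\psi_{\sigma(k+1)}$ for every $k\in\mathbb{Z}$.

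The second step is to iterate. Setting $f=\psi_{\sigma(0)}\in L^2(\mathbb{T},\nu)$ and using $M_z^{k}g=z^{k}g$ for $g\in L^2(\nu)$, one gets $\psi_{\sigma(k)}=M_z^{k}f=z^{k}f$ for all $k\in\mathbb{Z}$, and relabelling via $n=\sigma(k)$ yields $\psi_n=z^{\sigma^{-1}(n)}f$ for every $n\in\mathbb{N}$. This is exactly the form of system that Theorem~\ref{Theorem: z Powers times f never Be a Basis of Discrete Measure} shows cannot be a Schauder basis of $L^2(\mathbb{T},\nu)$; the hypothesis that $\mathrm{Card}\{z\in\mathbb{T}:\nu(\{z\})>0\}$ is infinite is precisely what makes $L^2(\mathbb{T},\nu)$ infinite-dimensional so that the earlier theorem applies (otherwise $\psi$ would be finite and Theorem~\ref{Theorem: A Multi-Op be a shift on basis} would not be contradicted). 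This gives the desired contradiction.

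The only genuine obstacle I anticipate is not computational but the conceptual one handled in the first step: pinning down what ``$M_z$ can be written as a shift on $\psi$'' is meant to mean in the infinite-dimensional setting. Once one accepts that the natural extension of the single $n$-cycle in the opening observation is a single bi-infinite $\mathbb{Z}$-cycle, which is also forced by the combination of $M_z$ being bijective and the ``shift-like'' character of the action, the theorem collapses onto Theorem~\ref{Theorem: z Powers times f never Be a Basis of Discrete Measure} with no further work.
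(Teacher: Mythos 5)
Your argument is essentially identical to the paper's proof: both assume a bijection between $\mathbb{Z}$ and the index set realizing $M_z$ as a bilateral shift, iterate to get $\psi_n=z^{\sigma^{-1}(n)}f$ with $f$ the vector at index $0$, and then invoke Theorem~\ref{Theorem: z Powers times f never Be a Basis of Discrete Measure} for the contradiction. Your added remark that unitarity of $M_z$ forces the bilateral (rather than unilateral) interpretation is a small clarification the paper leaves implicit, but the route is the same.
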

\begin{proof}
Suppose that there exist a bijection $\sigma : \mathbb{N}
\rightarrow \mathbb{Z}$ and a Schauder basis
$\psi=\{\psi_{n}\}_{n=1}^{\infty}$ such that
$M_{z}\psi_{\sigma^{-1}(m)}=\psi_{\sigma^{-1}(m+1)}$. Then
$\psi_{\sigma^{-1}(m)}=z^{m}\psi_{\sigma^{-1}(0)}$. Let $f=\psi_{\sigma^{-1}(0)}$, then
$\psi_{k}=z^{\sigma(k)}f$. Thus, the sequence
$\{z^{\sigma(k-1)}f\}_{k=1}^{\infty}$
is a basis of $L^{2}(\mathbb{T},\nu)$, which contradicts with theorem \ref{Theorem: z Powers
times f never Be a Basis of Discrete Measure}.
\end{proof}


\begin{thebibliography}{000}
\bibitem{Cao} Cao Y, Tian G, Hou B. Schauder Bases and Operator
Theory. Available at http://arxiv.org/abs/1203.3603.

\bibitem{B} Megginson R. An introuduction to Banach Space Theory. New York, NY, USA: Springer-Verlag, 1998.

\bibitem{Ivan-Hilbert-Hugh} Niven I, Zuckerman H, Montgomery H.
An introduction to the theory of numbers. New York, NY, USA: John Wiley
and Sons, Inc., 1991.

\bibitem{Rudin}
Rudin W. Real and complex analysis. New York, NY, USA: McGraw-Hill Book
Co., 1987.


\bibitem{Singer} Singer I.Bases in Banach Space I. New York, NY, USA: Springer-verlag, 1970.

\end{thebibliography}
\end{document}